\theoremstyle{plain}
\newtheorem{theorem}{Theorem}[section]
\newtheorem*{theorem*}{Theorem}
\newtheorem{definition}[theorem]{Definition}
\newtheorem*{prop*}{Proposition}
\newtheorem{cor}[theorem]{Corollary}
\newtheorem*{cor*}{Corollary}
\newtheorem{rem}[theorem]{Remark}
\newtheorem{ex}[theorem]{Example}
\newtheorem*{mt*}{Main Theorem}
\DeclareMathOperator{\Imm}{Im}
\DeclareMathOperator{\Ker}{Ker}
\newcommand{\GL}{\hbox{\rm GL}}
\newcommand{\rr}{\mathbb{R}}
\newcommand{\cc}{\mathbb{C}}
\newcommand{\del}{\partial}
\newcommand{\delbar}{\overline{\del}}
\newcommand\SU{\hbox{\rm SU}}
\begin{document}
\date{\today}
\title[On geometric Bott-Chern formality and deformations]
{On geometric Bott-Chern formality and deformations}
\author{Nicoletta Tardini}
\address{Dipartimento di Matematica\\
Universit\`{a} di Pisa \\
Largo Bruno Pontecorvo 5, 56127\\
Pisa, Italy}
\email{tardini@mail.dm.unipi.it}
\author{Adriano Tomassini}
\address{Dipartimento di Matematica e Informatica\\
Universit\`{a} di Parma \\
Viale Parco Area delle Scienze 53/A, 43124 \\
Parma, Italy}
\email{adriano.tomassini@unipr.it}\thanks{Partially supported by GNSAGA
of INdAM and by the Project PRIN ``Variet\`a reali e complesse: geometria, topologia e analisi armonica''}
\keywords{complex manifold; Bott-Chern cohomology; formality; deformation; Aeppli-Bott-Chern Massey triple product}
\subjclass[2010]{32Q99, 53C55, 32C35}
\begin{abstract} A notion of geometric formality in the context of Bott-Chern and Aeppli cohomologies on a complex manifold 
is discussed. In particular, by using Aeppli-Bott-Chern-Massey triple products, it is proved that geometric Aeppli-Bott-Chern formality is not 
stable under small deformations of the complex structure. 
\end{abstract}

\maketitle
\section*{Introduction}

On a complex manifold one can consider two different kinds of invariants:
the topological ones of the underline manifold and the complex ones.
Among the first ones a fundamental role is played by de Rham cohomology,
among the second ones we recall the Dolbeault, Bott-Chern and Aeppli
cohomologies; where, Bott-Chern and Aeppli cohomologies of
a complex manifold $X$ are, respectively, defined as
\[
H_{BC}^{\bullet,\bullet}(X):=
\frac{\hbox{\rm Ker}\del\cap
\hbox{\rm Ker}\overline{\partial}}{\hbox{\rm Im}\del\delbar},\qquad
H_{A}^{\bullet,\bullet}(X):=
\frac{\hbox{\rm Ker}\del\delbar}
{\hbox{\rm Im}\del+
\hbox{\rm Im}\delbar}.
\]
Since all the cohomologies just mentioned coincide on a compact K\"ahler
manifold, more precisely $\del\delbar$-lemma holds on $X$ (this is
in particular true on a K\"ahler manifold) if and only if
the maps induced by identity in the diagram\\
$$
\xymatrix{
 & H^{\bullet,\bullet}_{BC}(X) \ar[d]\ar[ld]\ar[rd] & \\
 H^{\bullet,\bullet}_{\del}(X) \ar[rd] & H^{\bullet}_{dR}(X,\cc) \ar[d] & H^{\bullet,\bullet}_{\delbar}(X) \ar[ld] \\
 & H^{\bullet,\bullet}_{A}(X) &
}.
$$
are all isomorphisms,
then Bott-Chern and Aeppli cohomologies could provide more informations
on the complex structure when $X$ does not admit a K\"ahler metric.\\
The theory of formality, developed by Sullivan, concerns with
differential-graded-algebras, namely graded algebras endowed with a derivation
with square equal to $0$. An immediate example is given by the space of differential
(resp. complex) forms on a differentiable (resp. complex) manifold together
with the exterior derivative. It is proven in \cite{DGMS} that compact complex manifolds
satisfying $\del\delbar$-lemma are formal in the sense of Sullivan.\\
On the other side, on a complex manifold $X$ the double complex
of bigraded forms
$\left(\Lambda^{\bullet,\bullet}X,\del,\delbar\right)$ is naturally defined;
then, one could ask whether a notion of formality could be defined
in case of bidifferential-bigraded-algebras. In this context Neisendorfer and Taylor developed a formality theory for the Dolbeault complex on complex manifolds (see \cite{NT}). In particular, we are interested
in a formality notion for Bott-Chern-cohomology.\\
Inspired by Kotschick \cite{K}, D. Angella and the second author in \cite{AT}, define a compact complex manifold $X$ being
geometrically-$H_{BC}$-formal if there exists a Hermitian metric $g$ on $X$
such that the space of $\Delta_{BC}$-harmonic forms (in the sense
of Schweitzer \cite{S}) has a structure of algebra. Moreover, an obstruction
to the existence of such a metric on $X$ is provided by {\em Aeppli-Bott-Chern-Massey
triple products} (see Theorem \ref{abc}).\\
In this note we are interested in studying the relationship
of this new notion with the complex structure, in particular
we discuss the behaviour of geometric-$H_{BC}$-formality
under small deformations of the complex structure (see \cite{TT} for similar results for Dolbeault formality).\\
Indeed, considering compact complex surfaces diffeomorphic to solvmanifolds
the property considered is open,
however, more in general, we prove the following

\smallskip
\noindent {\bfseries Theorem 1 (see Theorem \ref{instabilityCE} and Corollary \ref{instability}).\ }
{\itshape
 The property of geometric-$H_{BC}$-formality is not stable under small deformations
of the complex structure. 
}
\smallskip

A key tool in the proof of Theorem 1 is Theorem \ref{abc}. First of all we construct a complex curve ${J_t}$ of complex structures on  $X=\mathbb{S}^3\times\mathbb{S}^3$ such that 
$J_0$ is the the geometrically-$H_{BC}$-formal Calabi-Eckmann complex structure on $X$; then, by computing the Bott-Chern cohomology of $X_t=(\mathbb{S}^3\times\mathbb{S}^3,J_t)$ for small $t$, we exhibit a non-trivial Aeppli-Bott-Chern Massey triple product 
on $X_t$, for $t\neq 0$.
Furthermore, we show that the non holomorphically parallelizable Nakamura manifold has no geometrically $H_{BC}$-formal metric (see Example \ref{nakamura}).
\medskip

\noindent {\em Acknowledgements.} We would like to thank Daniele Angella for useful comments.
\smallskip

\section{Bott-Chern cohomology and Aeppli-Bott-Chern  geometrical formality}\label{preliminaries}
Let $X$ be a compact complex manifold of complex dimension $n$. We will denote by $A^{p,q}(X)$ the space of complex $(p,q)$-forms on $X$. The 
{\em Bott-Chern} and {\em Aeppli cohomology groups} of $X$ are defined respectively as (see \cite{A} and \cite{BC})
$$
H^{\bullet,\bullet}_{BC}(X)=\frac{\Ker\del\cap\Ker\delbar}{\Imm \del\delbar}\,,\quad
H^{\bullet,\bullet}_{A}(X)=\frac{\Ker\del\delbar}{\Imm \del+\Imm\delbar}\,.
$$
Let $g$ be a Hermitian metric on $X$ and $*:A^{p,q}(X)\to A^{n-p,n-q}(X)$ be the complex Hodge operator 
associated with $g$. Let $\tilde\Delta_{BC}$ and $ \tilde\Delta_{A}$ be the $4$-th order elliptic self-adjoint differential operators defined respectively as 
$$ 
\tilde\Delta_{BC}^g \;:=\;
\left(\del\delbar\right)\left(\del\delbar\right)^*+\left(\del\delbar\right)^*\left(\del\delbar\right)+\left(\delbar^*\del\right)\left(\delbar^*\del\right)^*+\left(\delbar^*\del\right)^*\left(\delbar^*\del\right)+\delbar^*\delbar+\del^*\del $$
and
$$ \tilde\Delta_{A}^g \;:=\; \del\del^*+\delbar\delbar^*+\left(\del\delbar\right)^*\left(\del\delbar\right)+\left(\del\delbar\right)\left(\del\delbar\right)^*+\left(\delbar\del^*\right)^*\left(\delbar\del^*\right)+\left(\delbar\del^*\right)\left(\delbar\del^*\right)^*\,.
$$
Then, accordingly to \cite{S}, it turns out that $H^{\bullet,\bullet}_{BC}(X)\simeq\Ker\tilde\Delta_{BC}^g$ and $H^{\bullet,\bullet}_{A}(X)\simeq\Ker\tilde\Delta_{A}^g$, so that 
$H^{\bullet,\bullet}_{BC}(X)$ and $H^{\bullet,\bullet}_{A}(X)$ are finite dimensional complex vector spaces. Denoting by $\alpha$ a $(p,q)$-form on $X$, note that
$$
\alpha\in\Ker\tilde\Delta_{BC}^g\quad\iff\quad
\left\{
\begin{array}{r}
\del\alpha=0\,,\\
\delbar\alpha=0\,,\\
\del\delbar*\alpha=0\,,
\end{array}
\right.
\quad \iff \quad*\alpha\in\Ker\tilde\Delta_{A}^g\,.
$$
Therefore,  $*$ induces an isomorphism between $H^{p,q}_{BC}(X)$ and $H^{n-p,n-q}_{A}(X)$. Furthermore, the wedge product induces a structure of algebra on 
$\bigoplus_{p,q} H^{p,q}_{BC}(X)$ and a structure of $\bigoplus_{p,q} H^{p,q}_{BC}(X)$-module on
$\bigoplus_{p,q} H^{p,q}_{A}(X)$ (see \cite[Lemme 2.5]{S}). 

Since in general the wedge product of harmonic forms may be not a harmonic form, the following definition makes sense (see \cite{K} for Riemannian metrics)
\begin{definition}
A Hermitian metric $g$ on $X$ is said to be {\em geometrically-$H_{BC}$-formal} if $\Ker\tilde\Delta_{BC}^g$ is an algebra. Similarly, a compact complex manifold $X$ is said to be {\em geometrically -$H_{BC}$-formal} if there exists a  geometrically $H_{BC}$-formal Hermitian metric on $X$.
\end{definition}

\section{Aeppli-Bott-Chern-Massey triple products}
Let $X$ be a compact complex manifold and denote by $(A^{\bullet,\bullet}(X),\del,\delbar)$ the bi-differential bi-graded algebra of $(p,q)$-forms on $X$.
As we have already noted in section \ref{preliminaries}, on a
compact complex manifold $X$, the Bott-Chern cohomology has
a structure of algebra, instead, the Aeppli cohomology has a structure
of $H^{\bullet,\bullet}_{BC}(X)$-module. This motivates the following (see  \cite{AT})
\begin{definition}
 Take
 $$
 \mathfrak{a}_{12} \;=\; \left[\alpha_{12}\right] \in H^{p,q}_{BC}(X) \;,
 \quad
 \mathfrak{a}_{23} \;=\; \left[\alpha_{23}\right] \in H^{r,s}_{BC}(X) \;,
 \quad
 \mathfrak{a}_{34} \;=\; \left[\alpha_{34}\right] \in H^{u,v}_{BC}(X) \;,
 $$
 such that $\mathfrak{a}_{12} \cup \mathfrak{a}_{23}=0$ in $H^{p+r,q+s}_{BC}(X)$ and $\mathfrak{a}_{23} \cup \mathfrak{a}_{34}=0$ in $H^{r+u,s+v}_{BC}(X)$: let
 $$
 (-1)^{p+q} \, \alpha_{12} \wedge \alpha_{23} \;=\; \del\delbar \alpha_{13}
 \qquad \text{ and } \qquad
 (-1)^{r+s} \, \alpha_{23} \wedge \alpha_{34} \;=\; \del\delbar \alpha_{24} \;.
 $$
 The {\em Aeppli-Bott-Chern triple Massey product} is defined as
\[
 \mathfrak{a}_{1234}
 \;:=\;
 \left\langle \mathfrak{a}_{12},\, \mathfrak{a}_{23},\, \mathfrak{a}_{34} \right\rangle_{ABC}
 :=
 \left[ (-1)^{p+q}\, \alpha_{12} \wedge \alpha_{24} - (-1)^{r+s}\, \alpha_{13} \wedge \alpha_{34} \right]\in
 \]
 \[\in \frac{H^{p+r+u-1, q+s+v-1}_{A}(X)}{H^{p,q}_{BC}(X) \cup H^{r+u-1, s+v-1}_{A}(X) + H^{p+r-1, q+s-1}_{A}(X) \cup H^{u,v}_{BC}(X)} \;.
\]
\end{definition}
Similarly to the real case, Aeppli-Bott-Chern Massey triple products provide an
obstruction to geometric-$H_{BC}$-formality (see also \cite{AT}).
\begin{theorem}\label{abc}
Let $X$ be a compact complex manifold. If $X$ is geometrically-$H_{BC}$-formal
then the Aeppli-Bott-Chern Massey triple products are trivial.
\end{theorem}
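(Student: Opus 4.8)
The plan is to show that if a geometrically-$H_{BC}$-formal metric $g$ exists, then every Aeppli-Bott-Chern Massey triple product vanishes. The guiding principle is the one familiar from the Riemannian case (Kotschick) and from classical Massey-product arguments: when cohomology classes can be represented by harmonic forms whose products are \emph{again} harmonic, the ambiguities in the Massey construction collapse and the product becomes forced to be zero.

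\medskip

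First I would fix a geometrically-$H_{BC}$-formal metric $g$ and, in each class $\mathfrak{a}_{12},\mathfrak{a}_{23},\mathfrak{a}_{34}\in H^{\bullet,\bullet}_{BC}(X)$, choose as representatives the unique $\tilde\Delta_{BC}^g$-harmonic forms $\alpha_{12},\alpha_{23},\alpha_{34}$. By the formality hypothesis $\Ker\tilde\Delta_{BC}^g$ is an algebra, so the wedge products $\alpha_{12}\wedge\alpha_{23}$ and $\alpha_{23}\wedge\alpha_{34}$ are themselves $\tilde\Delta_{BC}^g$-harmonic. Now the hypotheses $\mathfrak{a}_{12}\cup\mathfrak{a}_{23}=0$ and $\mathfrak{a}_{23}\cup\mathfrak{a}_{34}=0$ in Bott-Chern cohomology say precisely that these harmonic products represent the zero Bott-Chern class; but a harmonic form representing the trivial class must itself be zero, since the harmonic representative of a Bott-Chern class is unique. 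Hence
\[
\alpha_{12}\wedge\alpha_{23}=0\qquad\text{and}\qquad\alpha_{23}\wedge\alpha_{34}=0
\]
identically as forms. This is the decisive simplification: the $\del\delbar$-primitives $\alpha_{13},\alpha_{24}$ appearing in the definition can now be taken to be $0$, because $(-1)^{p+q}\alpha_{12}\wedge\alpha_{23}=0=\del\delbar\,0$ and likewise for $\alpha_{24}$.

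\medskip

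Second, I would substitute $\alpha_{13}=0$ and $\alpha_{24}=0$ directly into the definition of the Massey product. The representing form becomes
\[
(-1)^{p+q}\,\alpha_{12}\wedge\alpha_{24}-(-1)^{r+s}\,\alpha_{13}\wedge\alpha_{34}=0,
\]
so $\mathfrak{a}_{1234}=\langle\mathfrak{a}_{12},\mathfrak{a}_{23},\mathfrak{a}_{34}\rangle_{ABC}$ is represented by the zero form and is therefore the zero class in the Aeppli quotient. Since the Massey product is well defined independently of the choices of representatives and of the primitives $\alpha_{13},\alpha_{24}$ (as recorded in the definition), computing it with one admissible choice suffices, and we conclude that the triple product is trivial for every admissible triple of classes.

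\medskip

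The step I expect to require the most care is the claim that a $\tilde\Delta_{BC}^g$-harmonic form representing the zero Bott-Chern class must vanish identically. This is the Bott-Chern analogue of the Hodge-theoretic uniqueness of harmonic representatives, and it follows from Schweitzer's isomorphism $H^{\bullet,\bullet}_{BC}(X)\simeq\Ker\tilde\Delta_{BC}^g$ quoted earlier: the orthogonal decomposition furnished by the elliptic self-adjoint operator $\tilde\Delta_{BC}^g$ gives a unique harmonic representative in each class, so the harmonic representative of the zero class is $0$. I would make sure the bidegree bookkeeping is consistent throughout — that $\alpha_{12}\wedge\alpha_{23}$ indeed lands in the bidegree $(p+r,q+s)$ whose harmonic representatives are governed by the same operator — but this is routine once the uniqueness statement is in hand. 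No subtlety concerning the module structure on Aeppli cohomology is needed, since the representing form is already zero before passing to the quotient.
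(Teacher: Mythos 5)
Your proposal is correct and follows essentially the same route as the paper's own proof: choose $\tilde\Delta_{BC}^g$-harmonic representatives, use the algebra structure of $\Ker\tilde\Delta_{BC}^g$ to conclude that $\alpha_{12}\wedge\alpha_{23}$ and $\alpha_{23}\wedge\alpha_{34}$ are harmonic, hence zero (being harmonic representatives of the zero class), so that $\alpha_{13}=\alpha_{24}=0$ is an admissible choice of primitives and the triple product is represented by the zero form. Your write-up is in fact more careful than the paper's, since you make explicit the uniqueness-of-harmonic-representative step via Schweitzer's isomorphism, which the paper leaves implicit.
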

\begin{proof}
Fix a Hermitian metric $g$ on $X$ such that $\hbox{\rm Ker}\Delta_{BC}^{g}$ has a
structure of algebra. Take
$$
 \mathfrak{a}_{12} \;=\; \left[\alpha_{12}\right] \in H^{p,q}_{BC}(X) \;,
 \quad
 \mathfrak{a}_{23} \;=\; \left[\alpha_{23}\right] \in H^{r,s}_{BC}(X) \;,
 \quad
 \mathfrak{a}_{34} \;=\; \left[\alpha_{34}\right] \in H^{u,v}_{BC}(X) \;,
 $$
 such that $\mathfrak{a}_{12} \cup \mathfrak{a}_{23}=0$ in $H^{p+r,q+s}_{BC}(X)$
 and $\mathfrak{a}_{23} \cup \mathfrak{a}_{34}=0$ in $H^{r+u,s+v}_{BC}(X)$, with
 $\alpha_{12}, \alpha_{23},\alpha_{34}$ harmonic representatives in the respective
 classes. Then $\alpha_{12}\wedge\alpha_{23}$ and $\alpha_{23}\wedge\alpha_{34}$
 are harmonic forms with respect to the Laplacian $\tilde\Delta_{BC}^{g}$. Hence,
 with the notation introduced, $\alpha_{13}=0$ and $\alpha_{24}=0$. Therefore,
 by definition, $\left\langle \mathfrak{a}_{12},\, \mathfrak{a}_{23},\,
 \mathfrak{a}_{34} \right\rangle_{ABC}=0$.
\end{proof}
\begin{ex}\label{nakamura}
Let $G=\cc \ltimes_{\varphi} \cc^2$, where $\varphi:\cc\to\GL(2,\cc)$ is defined as 
$$
\varphi(x+iy)=
\left(
\begin{matrix}
e^x & 0\\
0 & e^{-x}
\end{matrix}
\right)\,.
$$
Then for some $a\in\rr$ the matrix $
\left(
\begin{matrix}
e^a & 0\\
0 & e^{-a}
\end{matrix}
\right)
$ is conjugated to a matrix in $\hbox{\rm SL}(2,\mathbb{Z})$. Then $\Gamma:=(a\mathbb{Z}+i2m\pi\mathbb{Z})\ltimes_\varphi \Gamma''$, 
with $\Gamma''$ lattice in $\cc^2$,  is a lattice in $G$ (see 
\cite{angella-kasuya} and \cite{Nak}). Denoting with $(z_1,z_2,z_3)$ global coordinates on $G$, the following forms
$$
\psi^1=dz_1\,,\qquad \psi^2=e^{-\frac{1}{2}(z_1+\overline{z}_1)}dz_2\,,\qquad \psi^3=e^{\frac{1}{2}(z_1+\overline{z}_1)}dz_3
$$  
are $\Gamma$-invariant. A direct computation shows that
$$
\begin{array}{lll}
\del \psi^1 =0 & \del\psi^2 = -\frac{1}{2}\psi^{12} & \del\psi^3 = \frac{1}{2}\psi^{13}\\[10pt]
\delbar \psi^1 =0 & \delbar\psi^2 = \frac{1}{2}\psi^{2\overline{1}} & \delbar\psi^3 = -\frac{1}{2}\psi^{3\overline{1}}\,,
\end{array}
$$
where $\psi^{A\overline{B}}=\psi^A\wedge\overline{\psi}^B$ and so on. Therefore $\{\psi^1,\psi^2,\psi^3\}$ give rise to complex $(1,0)$-forms on the compact manifold $X=\Gamma\backslash G$. 
We will show that $X$ is not geometrically-$H_{BC}$-formal. Let 
$$
\begin{array}{ll}
\mathfrak{a}_{12}=[e^{\frac12(z_1-\overline{z}_1)}\psi^{1\overline{3}}]\in H^{1,1}_{BC}(X)\,, &\mathfrak{a}_{23}=[e^{\frac12(z_1-\overline{z}_1)}\psi^{\overline{1}\overline{2}}]
\in H^{0,2}_{BC}(X)\,,\\[10pt]
\mathfrak{a}_{34}=[e^{-\frac12(z_1-\overline{z}_1)}\psi^{\overline{1}3}]\in H^{1,1}_{BC}(X)\,. &{}
\end{array}
$$ 
Then,
$$
e^{z_1-\overline{z}_1}\psi^{1\overline{1}\overline{2}\overline{3}}=\del\delbar (-e^{z_1-\overline{z}_1}\psi^{\overline{2}\overline{3}})\,.
$$
Therefore,
$$
\left\langle \mathfrak{a}_{12},\, \mathfrak{a}_{23},\, \mathfrak{a}_{34} \right\rangle_{ABC}
 =
 \left[e^{\frac12(z_1-\overline{z}_1)}\psi^{\overline{1}\overline{2}\overline{3}3}  \right]
 \in \frac{H^{1, 3}_{A}(X)}{H^{1,1}_{BC}(X) \cup H^{0,2}_{A}(X) } \;.
$$
According to the cohomology computations in \cite[table 4]{angella-kasuya}, it follows that 
$e^{\frac12(z_1-\overline{z}_1)}\psi^{\overline{1}\overline{2}\overline{3}3}\in \Ker\tilde\Delta_{A}^g$. Furthermore, a direct computation shows that 
$[e^{\frac12(z_1-\overline{z}_1)}\psi^{\overline{1}\overline{2}\overline{3}3}]\notin H^{1,1}_{BC}(X) \cup H^{0,2}_{A}(X)$, so that $\left\langle \mathfrak{a}_{12},\, \mathfrak{a}_{23},\, \mathfrak{a}_{34} \right\rangle_{ABC}$ is a non-trivial Aeppli-Bott-Chern Massey product. 
\end{ex}

\section{Instability of Bott-Chern  geometrical formality}
In this section, starting with a geometrical-$H_{BC}$-formal compact
complex manifold, we will construct a complex deformation which is no more
geometrically-$H_{BC}$-formal. 

Let $X=\mathbb{S}^3\times\mathbb{S}^3$ and $\mathbb{S}^3\simeq\SU(2)$ be the Lie group of special unitary $2\times 2$ matrices and denote by $\mathfrak{su}(2)$ the Lie algebra of $\SU(2)$. Denote by 
$\{e_1, e_2, e_3\}$, $\{f_1, f_2, f_3\}$  a basis of the first copy of $\mathfrak{su}(2)$, respectively of the second copy of $\mathfrak{su}(2)$ and by  
$\{e^1, e^2, e^3\}$, $\{f^1, f^2, f^3\}$ the corresponding dual co-frames. Then we have the following commutation rules:
$$
[e_1,e_2]=2e_3\,,\quad [e_1,e_3]=-2e_2,\quad [e_2,e_3]=2e_1\,,
$$
and the corresponding Cartan structure equations
\begin{equation}
\left\{\begin{array}{rcl}
            de^1 &=&          -  2\, e^2 \wedge e^3 \\[5pt]
            de^2 &=& \phantom{+} 2\, e^1 \wedge e^3 \\[5pt]
            de^3 &=&          -  2\, e^1 \wedge e^2 \\[5pt]
            df^1 &=&          -  2\, f^2 \wedge f^3 \\[5pt]
            df^2 &=& \phantom{+} 2\, f^1 \wedge f^3 \\[5pt]
            df^3 &=&          -  2\, f^1 \wedge f^2
           \end{array}\right. \;.
\end{equation}
Define a complex structure $J$ on $X$ by setting 
$$
Je_1=e_2,\,\quad Jf_1=f_2,\,\quad Je_3=f_3\,.
$$
Note that $J$ is a Calabi-Eckmann structure or its conjugate on $\mathbb{S}^3\times\mathbb{S}^3$ (see \cite{C-E} and \cite{P}).

We have the following
\begin{theorem}\label{instabilityCE}
Let $X=\mathbb{S}^3\times\mathbb{S}^3$ be endowed with the complex structure $J$. Then $X$ is geometrically-$H_{BC}$-formal and there exists
a small deformation $\left\lbrace X_t\right\rbrace$ of $X$ such that $X_t$
is not geometrically-$H_{BC}$-formal for $t\neq 0$.
\end{theorem}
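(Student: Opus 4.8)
The plan is to establish the two halves of the statement by different means: the geometric formality of $X=X_0$ by exhibiting an explicit metric and computing its Bott--Chern harmonic forms, and the instability by constructing an explicit deformation $X_t$ on which a non-trivial Aeppli--Bott--Chern--Massey triple product appears, so that Theorem \ref{abc} forbids formality. Throughout I would work with the left-invariant $(1,0)$-coframe
\[
\varphi^1=e^1+ie^2,\qquad \varphi^2=f^1+if^2,\qquad \varphi^3=e^3+if^3,
\]
for which a direct computation from the Cartan equations gives
\[
\del\varphi^1=i\,\varphi^{13},\quad \del\varphi^2=\varphi^{23},\quad \del\varphi^3=0,
\]
\[
\delbar\varphi^1=i\,\varphi^{1\overline{3}},\quad \delbar\varphi^2=-\varphi^{2\overline{3}},\quad \delbar\varphi^3=-i\,\varphi^{1\overline{1}}+\varphi^{2\overline{2}},
\]
where $\varphi^{AB}=\varphi^A\wedge\varphi^B$ and $\varphi^{A\overline{B}}=\varphi^A\wedge\overline{\varphi^{B}}$.

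For the formality of $X_0$ I would use the left-invariant Hermitian metric $g$ making $\{\varphi^1,\varphi^2,\varphi^3\}$ unitary; this is the bi-invariant metric of the compact group $\SU(2)\times\SU(2)$, and $J$ is $g$-orthogonal. Since the group acts on itself by holomorphic isometries, averaging over it shows that invariant forms compute $H^{\bullet,\bullet}_{BC}$ and $H^{\bullet,\bullet}_{A}$ and that every $\tilde\Delta_{BC}^g$-harmonic form is invariant; hence $\Ker\tilde\Delta_{BC}^g$ is the finite-dimensional space of invariant, $\del$- and $\delbar$-closed forms whose Hodge-$*$ is $\del\delbar$-closed, which I would list explicitly from the structure equations (for instance in bidegree $(1,1)$ it is exactly $\langle\varphi^{1\overline{1}},\varphi^{2\overline{2}}\rangle$, and one checks $\del\delbar *\varphi^{1\overline{1}}=\del\delbar(c\,\varphi^{23\overline{2}\overline{3}})=0$). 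It then remains to verify that the wedge of any two harmonic forms is harmonic; the products that do not vanish for type reasons are controlled by identities such as $\del\delbar\,\varphi^{3\overline{3}}=0$ on $X_0$, which makes $\varphi^{1\overline{1}}\wedge\varphi^{2\overline{2}}=-\varphi^{12\overline{1}\overline{2}}$ harmonic. This shows $\Ker\tilde\Delta_{BC}^g$ is an algebra, i.e. $g$ is geometrically-$H_{BC}$-formal.

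For the instability I would deform $J$ along the left-invariant curve whose $(1,0)$-forms are $\varphi^1_t=\varphi^1$, $\varphi^2_t=\varphi^2$, $\varphi^3_t=\varphi^3+t\,\overline{\varphi^3}$ for $|t|<1$; one checks that $d\varphi^i_t$ has no $(0,2)_t$-component, so $J_t$ is integrable and $J_0=J$. Rewriting the structure equations in the coframe $\{\varphi^i_t\}$ and arguing again by invariant forms, the classes $[\varphi^{1\overline{1}}_t],[\varphi^{2\overline{2}}_t]\in H^{1,1}_{BC}(X_t)$ persist, and the key computation is the identity
\[
\del_t\delbar_t\,\varphi^{3\overline{3}}_t=4\,\mathrm{Im}(t)\,\varphi^{12\overline{1}\overline{2}}_t .
\]
Hence for $\mathrm{Im}(t)\neq0$ the product $[\varphi^{1\overline{1}}_t]\cup[\varphi^{2\overline{2}}_t]=[-\varphi^{12\overline{1}\overline{2}}_t]$ vanishes in $H^{2,2}_{BC}(X_t)$, although it is non-zero at $t=0$. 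Taking $\mathfrak{a}_{12}=[\varphi^{1\overline{1}}_t]$ and $\mathfrak{a}_{23}=\mathfrak{a}_{34}=[\varphi^{2\overline{2}}_t]$, both consecutive cup products vanish; choosing $\alpha_{13}=-\tfrac1{4\,\mathrm{Im}(t)}\varphi^{3\overline{3}}_t$ and $\alpha_{24}=0$, the triple product is represented by a non-zero multiple of $\varphi^{23\overline{2}\overline{3}}_t$ in $H^{2,2}_A(X_t)$.

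The main obstacle is to show that this class survives, i.e. that $\varphi^{23\overline{2}\overline{3}}_t$ is neither $(\del,\delbar)$-exact in the Aeppli sense nor contained in the indeterminacy $H^{1,1}_{BC}(X_t)\cup H^{1,1}_A(X_t)$. For this I would compute $H^{1,1}_{BC}(X_t)$, $H^{1,1}_A(X_t)$ and $H^{2,2}_A(X_t)$ by invariant forms for small $t\neq0$ and evaluate the cup-product map $H^{1,1}_{BC}\otimes H^{1,1}_A\to H^{2,2}_A$. The decisive point is that the only $(1,1)$-form whose wedge with $\varphi^{1\overline{1}}_t$ or $\varphi^{2\overline{2}}_t$ can produce $\varphi^{23\overline{2}\overline{3}}_t$ is a multiple of $\varphi^{3\overline{3}}_t$, which by the displayed identity is not $\del\delbar$-closed for $\mathrm{Im}(t)\neq0$ and so defines no Aeppli class; verifying that no admissible representative repairs this is precisely the computation certifying that the indeterminacy misses the class. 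Once the triple product is non-trivial, Theorem \ref{abc} shows that $X_t$ carries no geometrically-$H_{BC}$-formal metric for $\mathrm{Im}(t)\neq0$, while $X_0$ is formal; since such $t$ accumulate at $0$, geometric-$H_{BC}$-formality is not stable under small deformations.
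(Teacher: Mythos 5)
Your proposal is correct and follows the paper's proof essentially line by line: the same invariant coframe and diagonal metric, the same deformation (your $\varphi^3_t=\varphi^3+t\bar\varphi^3$ is the paper's $\varphi^3_t=\varphi^3-t\bar\varphi^3$ after $t\mapsto-t$), the same triple $\mathfrak{a}_{12}=[\varphi^{1\bar1}_t]$, $\mathfrak{a}_{23}=\mathfrak{a}_{34}=[\varphi^{2\bar2}_t]$ with $\alpha_{13}$ a multiple of $\varphi^{3\bar3}_t$ and $\alpha_{24}=0$, and the same appeal to Theorem \ref{abc}. One discrepancy deserves attention, and it is in your favour. The paper writes $d\varphi^3_t=(t-i)\varphi^{1\bar1}_t+(t+1)\varphi^{2\bar2}_t$, which leads to $\del\delbar\varphi^{3\bar3}_t=2\left(\lvert t\rvert^2+\Re\mathfrak{e}\,t-\Im\mathfrak{m}\,t\right)\varphi^{12\bar1\bar2}_t$ and to the non-degeneracy condition $\lvert t\rvert^2+\Re\mathfrak{e}\,t-\Im\mathfrak{m}\,t\neq0$. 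But from $d\varphi^3=-i\varphi^{1\bar1}+\varphi^{2\bar2}$ and $d\bar\varphi^3=-i\varphi^{1\bar1}-\varphi^{2\bar2}$ one gets $d\varphi^3_t=i(t-1)\varphi^{1\bar1}_t+(t+1)\varphi^{2\bar2}_t$ (note $i(t-1)\neq t-i$), and then $\del_t\delbar_t\varphi^{3\bar3}_t=\delbar_t\varphi^3_t\wedge\del_t\bar\varphi^3_t$ yields precisely your coefficient $\pm 4\,\Im\mathfrak{m}\,t$. So your identity is the correct one, and the honest non-formality locus is $\Im\mathfrak{m}\,t\neq0$ rather than the complement of the paper's circle. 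In both versions the obstruction dies along a real curve through $t=0$ (in yours, the real axis, where the deformation merely rescales the Calabi--Eckmann fibre), so neither argument literally yields ``not formal for all $t\neq0$''; your concluding step --- non-formal parameters accumulate at $0$, or equivalently restrict to the one-real-parameter family $t=is$ --- is exactly what is needed for instability (Corollary \ref{instability}).

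The only step you leave as a plan --- that $[\varphi^{23\bar2\bar3}_t]$ survives in the quotient by the indeterminacy --- is where the paper has a shortcut you should adopt in place of your case analysis of cup products: the invariant computation gives $H^{2,2}_{BC}(X_t)=0$ (exactly because $\varphi^{12\bar1\bar2}_t$ has become $\del\delbar$-exact), and the Hodge-star isomorphism $H^{2,2}_{BC}(X_t)\simeq H^{1,1}_{A}(X_t)$ then forces $H^{1,1}_{A}(X_t)=0$, so the indeterminacy subspace $H^{1,1}_{BC}\cup H^{1,1}_{A}+H^{1,1}_{A}\cup H^{1,1}_{BC}$ vanishes identically. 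After that, one only needs $[\varphi^{23\bar2\bar3}_t]\neq0$ in $H^{2,2}_{A}(X_t)$, which is immediate since $\varphi^{23\bar2\bar3}_t$ is $\tilde\Delta_{A}^{g_t}$-harmonic: indeed $\del\delbar\varphi^{23\bar2\bar3}_t=0$, and $*\varphi^{23\bar2\bar3}_t$ is a multiple of $\varphi^{1\bar1}_t$, which is $\del$- and $\delbar$-closed.
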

\begin{proof}
For the sake of the completeness we will recall the proof of geometric $H_{BC}$-formality of $X$ (see \cite{AT}).
According to the previous notation, a complex co-frame of $(1,0)$-forms for $J$ is given by 
\[
\left\{\begin{array}{rcl}
            \varphi^1 &:=& e^1 + i\, e^2 \\[5pt]
            \varphi^2 &:=& f^1 + i\, f^2 \\[5pt]
            \varphi^3 &:=& e^3 + i\, f^3
\end{array}\right. \;.
\]
Therefore the complex structure equations are given by
\[
\left\{\begin{array}{rcl}
            d\varphi^1 &=&     i\varphi^1\wedge\varphi^3+
            					i\varphi^1\wedge\overline{\varphi}^3 \\[5pt]
            d\varphi^2 &=&     \varphi^2\wedge\varphi^3-
            					\varphi^2\wedge\overline{\varphi}^3 \\[5pt]
            d\varphi^3 &=&     -i\varphi^1\wedge\overline{\varphi}^1+
            					\varphi^2\wedge\overline{\varphi}^2
       \end{array}\right. \;,
\]
in particular,
\[
 \left\{\begin{array}{rcl}
            \del \varphi^1 &=& i \varphi^1 \wedge \varphi^3 \\[5pt]
            \del \varphi^2 &=& \varphi^2 \wedge \varphi^3 \\[5pt]
            \del \varphi^3 &=& 0
           \end{array}\right.
    \qquad \text{ and } \qquad
    \left\{\begin{array}{rcl}
            \delbar \varphi^1 &=& i\varphi^1\wedge\overline{\varphi}^3\\[5pt]
            \delbar \varphi^2 &=& -\varphi^2\wedge\overline{\varphi}^3\\[5pt]
            \delbar \varphi^3 &=& -i\varphi^1\wedge\overline{\varphi}^1+
            					\varphi^2\wedge\overline{\varphi}^2
           \end{array}\right. \;.
\]
Now fix
the Hermitian metric whose associated fundamental form is
\[
\omega \;:=\; \frac{i}{2} \sum_{j=1}^{3} \varphi^j
\wedge \overline{\varphi}^j \;.
\]
As a matter of notation, from now on we shorten, e.g.,
$\varphi^{1\overline{1}}:=
\varphi^{1}\wedge\overline{\varphi}^1$.\\
As regards the Bott-Chern cohomology, thanks to \cite[Theorem 1.3]{angella-kasuya} we have that the sub-complex
\[
\iota \colon \wedge \left\langle \varphi^1,\, \varphi^2,\, \varphi^3,\, \bar\varphi^1,\, \bar\varphi^2,\, \bar\varphi^3 \right\rangle \hookrightarrow A^{\bullet,\bullet}(X)
\]
is such that $H_{BC}(\iota)$ is an isomorphism, hence,
by explicit computations, we get
\[
\displaystyle\begin{array}{lcl}
H_{BC}^{0,0}(X) & = &  \displaystyle
\cc \left\langle \left[ 1\right] \right\rangle, \\[5pt]
H_{BC}^{1,1}(X) & = &  \displaystyle
\cc \left\langle \left[
\varphi^{1\overline{1}}\right],
\left[
\varphi^{2\overline{2}}\right] \right\rangle,\\[10pt]
\displaystyle H_{BC}^{2,1}(X) & = &  \displaystyle
\cc \left\langle \left[
\varphi^{23\overline{2}}+i\varphi^{13\overline{1}}\right] \right\rangle,\\[10pt]
\displaystyle H_{BC}^{1,2}(X) & = &  \displaystyle
\cc \left\langle \left[
\varphi^{2\overline{2}\overline{3}}-i\varphi^{1\overline{1}
\overline{3}}\right] \right\rangle,\\[10pt]
H_{BC}^{2,2}(X) & = &  \cc \left\langle \left[
\varphi^{12\overline{1}\overline{2}}\right] \right\rangle,\\[10pt]
H_{BC}^{3,2}(X) & = &  \cc \left\langle \left[
\varphi^{123\overline{1}\overline{2}}\right] \right\rangle,\\[10pt]
H_{BC}^{2,3}(X) & = &  \cc \left\langle \left[
\varphi^{12\overline{1}\overline{2}\overline{3}}\right] \right\rangle,\\[10pt]
H_{BC}^{3,3}(X) & = &  \cc \left\langle \left[
\varphi^{123\overline{1}\overline{2}\overline{3}}\right] \right\rangle.
\end{array}
\]
The other Bott-Chern cohomology groups are trivial.\\
Notice that the Hermitian metric $g$ associated to $\omega$ is
geometrically-$H_{BC}$-formal, hence
$\mathbb{S}^3\times\mathbb{S}^3$ is
geometrically-$H_{BC}$-formal. Now our purpose is to prove
that geometrical-$H_{BC}$-formality is not
stable under small deformations of the complex structure.
In order to get this result, let $J_t$ be the almost complex structure on $X$
defined as
\[
\left\{\begin{array}{rcl}
            \varphi^1_t &:=& \varphi^1 \\[5pt]
            \varphi^2_t &:=& \varphi^2 \\[5pt]
            \varphi^3_t &:=& \varphi^3-t\overline{\varphi}^3
       \end{array}
\right.
\]
then
\[
\left\{\begin{array}{rcl}
            d\varphi^1_t &=& \frac{i(\bar{t}+1)}{1-\mid t\mid^2}
            \varphi ^{13}_t + \frac{i(t+1)}{1-\mid t\mid^2}
            \varphi ^{1\bar{3}}_t \\[10pt]
            d\varphi^2_t &=& \frac{1-\bar{t}}{1-\mid t\mid^2}
            \varphi ^{23}_t + \frac{t-1}{1-\mid t\mid^2}
            \varphi ^{2\bar{3}}_t \\[10pt]
            d\varphi^3_t &=& (t-i)
            \varphi ^{1\bar{1}}_t + (t+1)
            \varphi ^{2\bar{2}}_t
           \end{array}\right. \;.
\]
and consequently $J_t$ is integrable. Set $X_t=(X,J_t)$ and $g_t$
the Hermitian metric whose fundamental form is $\omega_t=
\frac{i}{2}\sum\varphi_t^j\wedge\overline{\varphi}_t^j$. By applying again 
\cite[Theorem 1.3]{angella-kasuya}, we compute the Bott-Chern cohomology of $X_t$;\\
if $\lvert t\rvert^2+\Re\mathfrak{e}\,t-\Im\mathfrak{m}\,t\neq 0$ we get
\[
\begin{array}{lcl}
H_{BC}^{0,0}(X_t) & = &  \displaystyle
\cc \left\langle \left[ 1\right] \right\rangle, \\[10pt]
H_{BC}^{1,1}(X_t) & = &  \displaystyle
\cc \left\langle \left[
\varphi^{1\overline{1}}_t\right],
\left[
\varphi^{2\overline{2}}_t\right] \right\rangle,\\[10pt]
H_{BC}^{2,1}(X_t) & = &  \displaystyle
\cc \left\langle \left[
\varphi^{23\overline{2}}_t+\frac{i-t}{t+1}\varphi_t^{13\overline{1}}\right] \right\rangle,\\[10pt]
H_{BC}^{1,2}(X_t) & = &  \displaystyle
\cc \left\langle \left[
\varphi^{2\overline{2}\overline{3}}_t-\frac{i+\bar{t}}{t+1}\varphi_t^{1\overline{1}
\overline{3}}\right] \right\rangle,\\[10pt]
H_{BC}^{3,2}(X_t) & = &  \cc \left\langle \left[
\varphi_t^{123\overline{1}\overline{2}}\right] \right\rangle,\\[10pt]
H_{BC}^{2,3}(X_t) & = &  \cc \left\langle \left[
\varphi_t^{12\overline{1}\overline{2}\overline{3}}\right] \right\rangle,\\[10pt]
H_{BC}^{3,3}(X_t) & = &  \cc \left\langle \left[
\varphi_t^{123\overline{1}\overline{2}\overline{3}}\right] \right\rangle,
\end{array}
\]
where the other groups are trivial, in particular $H^{2,2}_{BC}(X,J_t)$
vanishes.\\
Now we will show that there are no geometrical-$H_{BC}$-formal
Hermitian metric on $X_t$.
To this purpose
we are going to exhibit a non-trivial ABC-Massey triple product.\\
Setting
$$
 \mathfrak{a}_{12} \;=\; \left[\varphi^{1\bar{1}}_t\right] \in H^{1,1}_{BC}
 (X_t) \;,
$$
$$
 \mathfrak{a}_{23} \;=\; \left[\varphi^{2\bar{2}}_t\right] \in H^{1,1}_{BC}
(X_t)\;,
$$
$$
 \mathfrak{a}_{34} \;=\; \left[\varphi^{2\bar{2}}_t\right] \in H^{1,1}_{BC}
 (X_t) \;. $$\\
we get $ \mathfrak{a}_{12}\cup\mathfrak{a}_{23}=\mathfrak{a}_{23}\cup
\mathfrak{a}_{34}=0$. Indeed
\[
\del\delbar\varphi^{3\bar{3}}_t=\left[(t-i)(\bar{t}+1)+(t+1)(\bar{t}+i)\right]
\varphi^{12\bar{1}\bar{2}}_t=:A_t\varphi^{12\bar{1}\bar{2}}_t,
\]
so, in the hypothesis that $\lvert t\rvert^2+\Re\mathfrak{e}\,t-\Im\mathfrak{m}\,t\neq 0$,
we can take as representatives
\[
\alpha_{13}=-\frac{1}{A_t}\varphi^{3\bar{3}}_t, \qquad
\alpha_{24}=0.
\]
Thus the corresponding ABC-Massey product is
 \begin{eqnarray*}
 \;
 \left\langle \mathfrak{a}_{12},\, \mathfrak{a}_{23},\, \mathfrak{a}_{34} \right\rangle_{ABC}&=&
 \left[ -\frac{1}{A_t}\varphi^{23\bar{2}\bar{3}}_t\right] 
 \in \frac{H^{2, 2}_{A}}{H^{1,1}_{BC} \smile H^{1, 1}_{A} + H^{1,1}_{A}
 \smile H^{1,1}_{BC}} \left(X_t\right)\;.
 \end{eqnarray*}
It is easy to check that this ABC-Massey triple product is not zero, in fact
$\left[ -\frac{1}{A_t}\varphi^{23\bar{2}\bar{3}}_t\right]\neq 0$ in
$H^{2, 2}_{A}(X_t)$ since $-\frac{1}{A_t}\varphi^{23\bar{2}\bar{3}}_t$
is $\tilde\Delta_{A}^{g_t}$-harmonic in $X_t$.
Furthermore $H_A^{1,1}(X_t)=\left\lbrace 0\right\rbrace$,
since $H_{BC}^{2,2}(X_t)=\left\lbrace 0\right\rbrace$.\\
This concludes the proof.
\end{proof}
As a consequence, we obtain the following
\begin{cor}\label{instability}
The property of geometric-$H_{BC}$-formality is not stable under small deformations
of the complex structure. 
\end{cor}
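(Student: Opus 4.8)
The plan is to obtain the corollary as an immediate consequence of Theorem~\ref{instabilityCE}, which already exhibits an explicit family witnessing the failure of stability. First I would make precise what ``stability under small deformations'' means: a property $P$ of compact complex manifolds is stable if, whenever $\{X_t\}$ is a complex-analytic family over a disc whose central fibre $X_0$ satisfies $P$, then $X_t$ satisfies $P$ for all $t$ in a sufficiently small neighbourhood of $0$. To negate stability it is therefore enough to produce one such family in which $X_0$ satisfies $P$ while $X_t$ fails $P$ for every $t\neq 0$ close to $0$.

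Next I would invoke directly the deformation $\{X_t\}=(\mathbb{S}^3\times\mathbb{S}^3,J_t)$ of Theorem~\ref{instabilityCE}. Its central fibre $X_0=(\mathbb{S}^3\times\mathbb{S}^3,J)$ carries the Calabi--Eckmann structure, and the metric associated with $\omega$ is geometrically-$H_{BC}$-formal, so $P$ holds at $t=0$. For $t\neq 0$ with $\lvert t\rvert^2+\Re\mathfrak{e}\,t-\Im\mathfrak{m}\,t\neq 0$, the same theorem produces a non-trivial Aeppli--Bott--Chern--Massey triple product on $X_t$; by Theorem~\ref{abc} this obstructs the existence of any geometrically-$H_{BC}$-formal Hermitian metric, so $X_t$ fails $P$ for all small $t\neq 0$. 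Taking $t\to 0$ yields formal and non-formal fibres arbitrarily close together, contradicting stability, and the corollary follows.

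Since the heavy lifting is carried out in Theorem~\ref{instabilityCE}, the corollary itself presents no genuine obstacle; the substantive difficulties sit inside that theorem. They are: the Bott--Chern cohomology computation for the family via \cite[Theorem~1.3]{angella-kasuya}, and in particular the vanishing of $H^{2,2}_{BC}(X_t)$, which through the Hodge-$*$ isomorphism forces $H^{1,1}_A(X_t)=0$ and thereby collapses the indeterminacy subspace $H^{1,1}_{BC}\smile H^{1,1}_A+H^{1,1}_A\smile H^{1,1}_{BC}$ of the Massey product to zero; and the verification that the representative $-\tfrac{1}{A_t}\varphi^{23\bar2\bar3}_t$ is $\tilde\Delta^{g_t}_A$-harmonic and hence nonzero in $H^{2,2}_A(X_t)$. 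Once these are established, the nontriviality of the Massey product, and with it the corollary, is automatic.
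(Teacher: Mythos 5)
Your proposal is correct and coincides with the paper's own treatment: the corollary is stated there as an immediate consequence of Theorem~\ref{instabilityCE}, whose proof already combines the Bott--Chern computations for $X_t$ with Theorem~\ref{abc} exactly as you describe. Your added precision about what ``stability'' means and the observation that non-formality need only hold for $t$ off the circle $\lvert t\rvert^2+\Re\mathfrak{e}\,t-\Im\mathfrak{m}\,t=0$ are harmless refinements of the same argument.
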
 
\begin{rem}
Recall that a Hermitian metric $g$ on a complex manifold $X$ of dimension $n$ is said to be {\em strong K\"ahler with torsion} (shortly {\em SKT}), respectively 
{\em Gauduchon}, if its fundamental form $\omega$ satisfies $\del\delbar \omega =0$, respectively $\del\delbar \omega^{n-1} =0$.\newline 
A direct computation shows that the Hermitian metric $\omega :=\frac{i}{2} \sum_{j=1}^{3} \varphi^j
\wedge \overline{\varphi}^j$ defined on $\mathbb{S}^3\times\mathbb{S}^3$
is SKT and Gauduchon. As regard the deformation previously
considered, there are two different situations for $X_t$ in a
small neighborhood of $t=0$. If $\lvert t\rvert^2+
\Re\mathfrak{e}\,t-\Im\mathfrak{m}\,t\neq 0$
the Hermitian
metric $\omega_t:=\frac{i}{2}\sum\varphi_t^j\wedge\overline{\varphi}_t^j$ is not SKT, and, more precisely,
$X_t$ does not admit such a metric.

Indeed, let
\[
\del\delbar\varphi_t^{3\bar 3}=-2\left(\lvert t\rvert^2+
\Re\mathfrak{e}\,t-\Im\mathfrak{m}\,t \right)
\varphi_t^{1 \bar 1 2 \bar 2}=:U_t;
\]
then, for $\lvert t\rvert^2+
\Re\mathfrak{e}\,t-\Im\mathfrak{m}\,t > 0$, the $(2,2)$-form $U_t$ gives rise to a $\del\delbar$-exact $(1,1)$-positive non-zero current on $X_t$. Then, in view of 
the
characterisation Theorem of the existence of SKT metrics in terms of currents (see \cite{E} and \cite{Pop}), it follows that $X_t$ has no SKT metrics for $\lvert t\rvert^2+
\Re\mathfrak{e}\,t-\Im\mathfrak{m}\,t > 0$.

Otherwise, if $\lvert t\rvert^2+
\Re\mathfrak{e}\,t-\Im\mathfrak{m}\,t= 0$, a straightforward computation shows that the Hermitian metric $\omega_t$ is SKT.

\end{rem}

\end{document}